\documentclass[review,12pt]{elsarticle}
\usepackage[T1]{fontenc}
\usepackage[utf8]{inputenc}
\usepackage{lmodern}
\usepackage{amsmath,amssymb,amsthm,mathtools,mathrsfs}
\usepackage[a4paper,total={160mm,240mm}]{geometry}
\usepackage{microtype,booktabs,needspace,indentfirst}
\usepackage{setspace}
\usepackage[nobysame]{amsrefs}

\usepackage{xcolor}
\usepackage{hyperref}
\hypersetup{colorlinks=true,linkcolor=blue,citecolor=blue,urlcolor=black,
  pdftitle={A proof of Ward's conjecture}}
\newtheorem{theorem}{Theorem}[section]
\newtheorem{proposition}[theorem]{Proposition}
\newtheorem{lemma}[theorem]{Lemma}

\theoremstyle{definition}

\newtheorem{example}[theorem]{Example}
\theoremstyle{remark}

\numberwithin{equation}{section}
\newcommand{\F}{\mathbb F}
\newcommand{\Z}{\mathbb Z}
\newcommand{\Q}{\mathbb Q}
\newcommand{\X}{\mathbf X}

\newcommand{\M}{\mathcal M}
\newcommand{\Epts}{\mathcal E}

\newcommand{\PG}{\mathrm{PG}}

\DeclareMathOperator{\wt}{wt}
\DeclareMathOperator{\supp}{supp}

\newcommand{\vp}{\nu_p}
\newcommand{\ceil}[1]{\left\lceil#1\right\rceil}

\allowdisplaybreaks[1]
\begin{document}
\begin{frontmatter}

\author[inst1]{Haihua Deng}
\ead{haihua.deng@zju.edu.cn}

\author[inst1]{Weiye Zhang}
\ead{zhangweiye@zju.edu.cn}

\affiliation[inst1]{organization={School of Mathematical Sciences, Zhejiang University},%
            city={Hangzhou},
            postcode={310058}, 
            country={China}}

\title{A proof of Ward's conjecture on Griesmer codes}

\begin{abstract}
Let $C$ be an $[n,k,d]_q$ Griesmer code, where $q=p^f$ with $p$ prime, and write $e=\nu_p(d)$. Ward conjectured that $C$ is $p^{\max\{0,e-f+1\}}$-divisible. In this paper, we prove this conjecture.
\end{abstract}
\begin{keyword}
Griesmer code \sep Divisible code \sep Polynomial \sep Shortened code \sep Endpoint
\MSC[2020] 94B05 \sep 11T71 \sep 11S05
\end{keyword}
\end{frontmatter}

\section{Introduction}

Divisible codes, introduced by Ward \cite{Ward1981}, have been extensively studied. A linear code $C$ is called \textit{$\Delta$-divisible} if the weights $\mathrm{wt}(c)$ of all codewords $c\in C$ are divisible by some positive integer $\Delta$. In \cite[Theorem 1]{Ward1981}, Ward proved a fundamental result regarding divisible codes, which states that any $\Delta$-divisible code $C$ over $\mathbb{F}_q$ with $\gcd(\Delta,q)=1$ is equivalent to a $\Delta$-fold repetition of some code $C'$, supplemented by sufficiently many 0 coordinates. Therefore, $p$-power divisible codes over $\mathbb{F}_q$ will be of particular interest, where $p$ is the characteristic of $\mathbb{F}_q$ (we write $q=p^f$ with $f\ge1$ throughout this paper). Many structural codes such as generalized Reed-Muller codes and Abelian codes exhibit \textit{good divisibility}, meaning that all weights are divisible by a ``large'' $p$-power; see \cites{Ax1964,Delsarte1976}. Griesmer codes also turn out to possess good divisibility; see \cite{DengHuangXiang2026, Ward1998, Ward2001}. For comprehensive surveys on divisible codes and their connections to finite geometry, we refer the reader to \cites{Ward2001,HonoldKiermaierKurz2018,Kurz2025}.

The \textit{(Hamming) weight} $\wt(c)$ of a word $c\in\F_q^n$ is the number of its nonzero coordinates. An \textit{$[n,k,d]_q$ code} is a $k$-dimensional subspace of $\F_q^n$ with minimum nonzero Hamming weight $d$. If the minimum distance is unspecified, we write an $[n,k]_q$ code. A codeword of weight $d$ is called a \textit{minimum weight codeword}. The Griesmer bound asserts that for any $[n,k,d]_q$ code, we have
\begin{equation}\label{eq:griesmer}
 n\ge g_q(k,d):=\sum_{i=0}^{k-1}\ceil{\frac d{q^i}}.
\end{equation}
The binary case was proved by Griesmer \cite[Theorem 2]{Griesmer1960}, and the general case was obtained by Solomon and Stiffler \cite[Theorem 1$'$]{SolomonStiffler1965}. A linear code attaining equality in \eqref{eq:griesmer} is called a \textit{Griesmer code}. For Griesmer codes, the code obtained by removing repetitions is again Griesmer; see \cite[Theorem 1.5]{DengHuangXiang2026}. Thus, we only need to consider the $p$-power divisibility of Griesmer codes according to \cite[Theorem 1]{Ward1981}.

The \textit{$p$-adic valuation} of a positive integer $m$, denoted by $\nu_p(m)$, is the largest integer $e$ for which $p^e|m$. We define the \textit{$p$-divisibility} of a code $C$ by
\[
\nu_p(C)=\min_{0\ne c\in C}\nu_p(\wt(c)).
\]
Let $C$ be a $[g_q(k,d),k,d]_q$ Griesmer code. Write $\nu_p(d)=e$. Clearly, we have $\nu_p(C)\le e$. In 1984, Dodunekov and Manev \cite{DodunekovManev1984} first showed that when $q=2$, if $2^e|d$, then $C$ is $2^e$-divisible. In 1998, Ward \cite[Theorem 1]{Ward1998} proved that when $q=p$, if $p^e|d$, then $C$ is $p^e$-divisible. In other words, $\nu_p(C)=e$ for any $[g_p(k,d),k,d]_p$ Griesmer code. Landjev \cite{Landjev2001} later gave a geometric polynomial proof of \cite[Theorem 1]{Ward1998}. Over a proper extension field, the equality $\nu_p(C)=e$ can fail; see \cite[Example 1.2]{DengHuangXiang2026}. Nevertheless, Ward proved that $q|d$ implies $p|\wt(c)$ for every $c\in C$ \cite[Proposition 13]{Ward2001}, and showed that when $q=4$, $\nu_2(C)\ge e-1$ \cite[Theorem 14]{Ward2001}. These results led Ward to conjecture \cite[Conjecture 10]{Ward2001} that
\[
 \nu_p(C)\ge\max\{0,e-f+1\}.
\]
Recently, Deng, Huang, and Xiang established the following results:
\begin{itemize}
    \item $\nu_p(C)\ge \left\lfloor \frac{e}{f} \right\rfloor$ \cite[Theorem 1.13]{DengHuangXiang2026};
    \item $\nu_p(C)\ge\max\{0,e-(f-1)(q-2)\}$ \cite[Theorem 1.14]{DengHuangXiang2026}.
\end{itemize}
Further divisibility results for arcs over non-prime fields were considered by Landjev and Rousseva \cite{LandjevRousseva2026}.

In this paper, we prove Ward's conjecture in its full generality.

\begin{theorem}\label{thm:ward}
Let $C$ be a $[g_q(k,d),k,d]_q$ Griesmer code with $\nu_p(d)=e$. Then
\[\nu_p(C)\ge\max\{0,e-f+1\}.\]
\end{theorem}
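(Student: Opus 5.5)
We argue by induction on $k$ and reduce, via the geometric (multiset) description of a Griesmer code, to a divisibility statement about hyperplane multiplicities. Let $\M$ be the multiset of points in $\PG(k-1,q)$ given by the columns of a generator matrix. Then $C$ is $p^t$-divisible if and only if $|\M\cap H|\equiv n\pmod{p^t}$ for every hyperplane $H$. The cases $e\le f-1$ are vacuous, so assume $e\ge f$ and put $t=e-f+1$. Throughout we use the standard Griesmer structure facts: a residual code of a Griesmer code with respect to a minimum weight codeword is again Griesmer, with parameters $[g_q(k-1,\lceil d/q\rceil),k-1,\lceil d/q\rceil]_q$. Moreover, for $q\mid d$ the shortened codes at the ``endpoints'' (the subspaces in the Solomon--Stiffler type decomposition) are also Griesmer.

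\textbf{Step 1 (reduction to minimum weight codewords).} Since $e\ge f$, we have $q\mid d$, so $\nu_p(d/q)=e-f$. By induction, the residual code $\mathrm{Res}(C,c)$ with respect to a minimum weight codeword $c$ is $p^{\max\{0,e-2f+1\}}$-divisible. Ward's argument in the prime case lifts divisibility of the residual to divisibility of $C$ with a loss of one factor $p$ per field-degree unit. That lift is too weak, so instead we encode divisibility polynomially. Consider the function $F(H)=|\M\cap H|-(n-d)$ on hyperplanes, which vanishes exactly at supports of complements of minimum weight words. We want to show $F(H)\equiv 0\pmod{p^{t}}$ for all $H$.

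\textbf{Step 2 (polynomial / character argument).} Write $\M$ as a function $\mu:\F_q^k\setminus\{0\}\to\Z$ and use additive characters. For $c_a=(a\cdot x)_{x}$, the weight is $\wt(c_a)=\frac{1}{q}\sum_{x}\mu(x)\sum_{\lambda\in\F_q^*}(1-\psi(\lambda a\cdot x))$. Expanding in $\Z[\zeta_p]$, divisibility of $\wt(c_a)$ by $p^t$ becomes a statement that a certain Gauss-sum-weighted Fourier transform of $\mu$ lies in $(1-\zeta_p)^{(p-1)(t+f)}$. The key input is the Griesmer condition, used in the form that for every subspace $U$ of codimension $j$ the multiplicity $|\M\cap U|$ equals its extremal value $n-\sum_{i<j}q^{k-1-i}\cdot(\text{digit data of }d)$ whenever $U$ lies in suitable flags of minimum weight supports. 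Using Stickelberger-type valuations of Gauss sums (the $p$-adic valuation of $g(\omega^{-j})$ is $s_p(j)/(p-1)$), we aim to show that each term in the expansion has valuation at least $t$. The point is that the loss is $f-1$ (the maximal deficiency of digit sums in the $f$ base-$p$ digits) rather than $(f-1)(q-2)$.

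\textbf{Step 3 (endpoints).} To control the terms with small digit sum, we use the shortened codes at endpoints of $d$'s $q$-adic expansion. These are Griesmer of dimension $k-1$ with minimum distance having $p$-valuation at least $e$, so induction gives their hyperplane congruences. Summing these over the pencil through a codimension-2 space recovers the congruence for $C$ modulo $p^{t}$.

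The main obstacle is Step 2: bounding the valuation of the mixed character sums uniformly so that the total loss is exactly $f-1$. I would first try to prove it for $q=p^2$, where the loss should be $1$, matching Ward's $q=4$ argument. I would then push the Stickelberger digit-sum estimate to general $f$, combined with the endpoint induction.
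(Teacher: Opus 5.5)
Your proposal has a genuine gap. Step 2 is explicitly left open, and Step 3, which is meant to close the induction, does not reach the exponent $t$.

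The induction input is fine. Each shortened code $C_P$ at an endpoint is a $[g_q(k-1,d),k-1,d]_q$ Griesmer code (Lemma~\ref{lem:endpoint}), hence $p^t$-divisible. So $p^t\mid\wt(c_H)$ whenever the hyperplane $H$ of $C^*$ contains an endpoint. The difficulty is a hyperplane $H$ containing no endpoint. Take $W\subset H$ of codimension $2$, and grant (this would itself need an argument) that the other $q$ hyperplanes through $W$ contain endpoints. Since $\sum_{H'\supset W}\M(H')=n+q\M(W)$, the pencil only gives
\[
\wt(c_H)\equiv q\,\bigl(n-\M(W)\bigr)\pmod{p^t}.
\]
Nothing in your hypotheses controls $n-\M(W)$, the effective length of the $2$-dimensional subcode annihilated by $W$. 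Indeed, a $2$-dimensional code can have $q$ of its $q+1$ projective weights equal to $p^t$ while the last one equals $q$ (multiplicities $1,\dots,1$ and $p^t+1-q$ on $\PG(1,q)$). So pencil summation yields only $p^{\min\{t,f\}}\mid\wt(c_H)$, and it fails as soon as $e\ge 2f$.

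Step 2 does not repair this. You give no mechanism by which the Griesmer property enters the Gauss-sum expansion, and the ``extremal multiplicity on flags'' condition is not a precise or known property of Griesmer codes. Step 3 also never uses how many endpoints there are, and that count is essential.

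Two ingredients are missing. The first is the endpoint count. By Lemma~\ref{lem:endpoint-count}, $N(C)\ge g_q(k,\eta)\ge\eta+1\ge p^e+1$; here $p^e\mid\eta$ when $q^{k-1}\nmid d$, and the case $q^{k-1}\mid d$ is the constant-weight Lemma~\ref{lem:constant}. Moreover $p^e>(q-1)p^{t-1}$ for $t=e-f+1$.

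The second is a transfer principle (Theorem~\ref{thm:shortening}): if $C_P$ is $p^t$-divisible for more than $(q-1)p^{t-1}$ points $P$, then $C$ is $p^t$-divisible. The paper proves it as follows.
\begin{enumerate}
\item Take the coefficients $F_j$ of $\prod_\ell\bigl(1+L_\ell(\X)^{q-1}Y\bigr)$; they satisfy $F_j(x)=\binom{\wt(xG)}{j}$ in $\F_q$.
\item Ward's divisibility criterion in its necessary direction, the estimate $S_p(j(q-1))\le(f+t-\nu_p(j)-1)(p-1)$, and a $p$-adic $\log$/$\exp$ argument show that $F_j=0$ as a polynomial, not merely as a function, for $1\le j<p^t$ on every $p^t$-divisible code.
\item Applied to each $C_P$, this makes every linear form $\mu_P$ divide $F_{p^a}$. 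Since $\deg F_{p^a}\le(q-1)p^a$, having more than $(q-1)p^{t-1}$ distinct linear factors forces $F_{p^a}=0$ for all $a<t$.
\item Lucas' theorem then gives $p^t\mid\wt(c)$.
\end{enumerate}

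Your digit-sum/Stickelberger intuition belongs in this argument, but it must be applied to the already divisible shortened codes to extract polynomial identities, not to $C$ directly. The loss of $f-1$ then comes from comparing the degree bound $(q-1)p^{t-1}$ with the endpoint count $p^e+1$, not from a character-sum estimate.
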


\section{Preliminaries}
\label{sec:prelim}

\subsection{Linear codes and projective multisets}

The \textit{support} of $c\in\F_q^n$ is $\supp(c):=\{i:c_i\ne0\}$. Let $C\le \F_q^n$ be a linear code. Define
\[
\supp(C):=\bigcup_{c\in C}\supp(c).
\]
The \textit{effective length} of $C$, denoted by $n(C)$, is the size of its support $|\supp(C)|$. We call $C$ \textit{full-length} if there is no coordinate $i$ such that $c_i=0$ for all $c=(c_1,\ldots,c_n)\in C$. Every Griesmer code is full-length, since deleting a zero coordinate would contradict \eqref{eq:griesmer}. 

Let $V$ be a $k$-dimensional vector space over $\F_q$. A \textit{linear form} on $V$ is a linear map $\lambda:V\to\F_q$, i.e., $\lambda(\alpha v+\beta w)=\alpha\lambda(v)+\beta\lambda(w)$ for all $v,w\in V$ and $\alpha,\beta\in\F_q$. All linear forms on $V$ form a vector space, denoted by $V^*$, which is called the \textit{dual space} of $V$. The \textit{classical projective space} $\PG(V)$, with the underlying vector space $V$, is defined as the geometry in which the point set $\mathcal{P}(V)$ is the set of all 1-dimensional subspaces of $V$, the line set $\mathcal{L}(V)$ is the set of all 2-dimensional subspaces of $V$, ..., the set $\mathcal{H}(V)$ of hyperplanes is the set of all $(k-1)$-dimensional subspaces of $V$, where the incidence relation is the natural inclusion of subspaces. When $V=\F_q^k$, we write $\PG(V)$ as $\PG(k-1,q)$.

Two linear codes $C_1,C_2$ are said to be \textit{linearly equivalent} if $C_2$ can be obtained from $C_1$ by permuting coordinate positions and scaling coordinates by nonzero elements. Two multisets $\mathcal{M}_1,\mathcal{M}_2$ in $\mathrm{PG}(k-1,q)$ are said to be \textit{projectively equivalent} if $\mathcal{M}_1$ can be mapped to $\mathcal{M}_2$ by a projective invertible transformation. A \textit{generator matrix} of a linear code $C$ is a matrix whose rows form a basis of $C$. If $C$ is full-length, we may identify $C$ with the multiset $\mathcal{M}$ corresponding to all the columns of any generator matrix $G$ of $C$ since different generator matrices of $C$ will give rise to projectively equivalent multisets, and conversely, projectively equivalent spanning multisets represent linearly equivalent codes; see \cite[Theorem 2.1]{TsfasmanVladut1995} and \cite{DodunekovSimonis1998}. Sometimes it would be useful to adopt the following matrix-free/basis-free description; see \cite[Section 1]{Ward2001}.

For $1\le \ell\le n$, the \textit{$\ell$-th coordinate form} of a linear code $C\le\F_q^n$ is the linear map $\lambda_\ell:C\to\F_q$, sending $c=(c_1,\ldots,c_n)\in C$ to $c_\ell$, the $\ell$-th coordinate of $c$. A \textit{multiset} $\mathcal{M}$ (of points) in $\PG(V)$ is a map $\mathcal{M}:\mathcal{P}(V)\to\mathbb{N}\cup\{0\}$. For $S\subseteq\mathcal{P}(V)$, we denote $\M(S)=\sum_{P\in S}\M(P)$. Recall that $C^*$ is the dual space of $C$. We define the \textit{multiset} of $C$ to be the map $\mathcal{M}_C:\mathcal{P}(C^*)\to\mathbb{N}\cup\{0\}$ with
\[
\M_C(P)=|\{\ell:\ \langle\lambda_\ell\rangle=P\}|, \text{ where }P\in\mathcal{P}(C^*).
\]
Thus, $|\M_C|=\M_C(\mathcal P(C^*))=n(C)$.

For a nonzero $c\in C$, let $H_c$ be the hyperplane consisting of the set of forms in $C^*$ that vanish at $c$. Then
\begin{equation}\label{eq:weight-hyperplane}
 \wt(c)=n(C)-\M_C(H_c).
\end{equation}
Put $\gamma(C)=\max_P\M_C(P)$. A point $P$ with $\M_C(P)=\gamma(C)$ is called an \textit{endpoint}. The set of endpoints is denoted by $\Epts(C)$, and $N(C)=|\Epts(C)|$.

For nonzero $\lambda\in C^*$, the \textit{shortened subcode} of $C$ at $P=\langle \lambda \rangle$ is $C_P=\ker\lambda\le C$. Clearly, $C_P$ has dimension $k-1$ and effective length $n(C)-\M_C(P)$. For convenience, we will call $C_P$ an $[n(C_P),k-1]_q$ code, rather than $[n(C),k-1]_q$ code.

\subsection{Properties of Griesmer codes}

In this subsection, we recall some useful properties of Griesmer codes.

\begin{lemma}[{\cite[Theorem 3.3 (2) and Proposition 3.4]{DengHuangXiang2026}}]\label{lem:endpoint}
Let $C$ be a $[g_q(k,d),k,d]_q$ Griesmer code with $k\ge 2$. Then $\gamma(C)=\left\lceil d/q^{k-1} \right\rceil$. If $P\in\Epts(C)$, then $C_P$ is a $[g_q(k-1,d),k-1,d]_q$ Griesmer code.
\end{lemma}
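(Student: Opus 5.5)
We argue by induction on $k$. The plan is to get the upper bound on $\gamma(C)$ by shortening at a point, to get the matching lower bound from the residual code of a minimum weight codeword, and then to read off the second assertion from the Griesmer bound applied to $C_P$.

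\emph{Upper bound.} Let $P\in\mathcal P(C^*)$ be any point. The shortened subcode $C_P$ has dimension $k-1$, effective length $n(C)-\M_C(P)$, and minimum distance $d'\ge d$, since it is a subcode of $C$. The bound \eqref{eq:griesmer}, applied to $C_P$ and combined with the fact that $g_q(k-1,\cdot)$ is monotone, gives
\[
g_q(k,d)-\M_C(P)\ \ge\ g_q(k-1,d')\ \ge\ g_q(k-1,d).
\]
Hence $\M_C(P)\le g_q(k,d)-g_q(k-1,d)=\ceil{d/q^{k-1}}$ for every $P$, so $\gamma(C)\le\ceil{d/q^{k-1}}$.

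\emph{Lower bound (the main step).} Take a minimum weight codeword $c$ and form the residual code $\mathrm{Res}(C,c)$, i.e.\ the restriction of $C$ to the $n-d$ coordinates where $c$ vanishes. Its multiset is exactly $\M_C$ restricted to the hyperplane $H_c$. The standard residual-code argument gives the following.
\begin{itemize}
\item[(i)] For any $x\in C\setminus\langle c\rangle$, some combination $x-\alpha c$ agrees with $x$ off $\supp(c)$ and vanishes on at least $\ceil{d/q}$ coordinates of $\supp(c)$ (pigeonhole on the ratios $x_i/c_i$). Its weight is therefore at most $\wt(\mathrm{res}(x))+d-\ceil{d/q}$.
\item[(ii)] Since that weight is at least $d$, every such restriction has weight at least $\ceil{d/q}$. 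In particular it is nonzero, so $\mathrm{Res}(C,c)$ has dimension $k-1$ and minimum distance $d_1\ge\ceil{d/q}$.
\end{itemize}
Applying \eqref{eq:griesmer} to the residual code gives $n-d\ge g_q(k-1,d_1)\ge g_q(k-1,\ceil{d/q})$. The right-hand side equals $g_q(k,d)-d$, using $\ceil{\ceil{d/q}/q^{i}}=\ceil{d/q^{i+1}}$. Since $n=g_q(k,d)$, all these inequalities are equalities, so $\mathrm{Res}(C,c)$ is a $[g_q(k-1,\ceil{d/q}),k-1,\ceil{d/q}]_q$ Griesmer code.

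If $k-1\ge2$, the induction hypothesis applied to the residual code yields a point of $H_c$ with multiplicity $\ceil{\ceil{d/q}/q^{k-2}}=\ceil{d/q^{k-1}}$. If $k-1=1$, the residual code is one-dimensional and full-length, hence supported on a single point of multiplicity $n-d=\ceil{d/q}$. In either case $\gamma(C)\ge\ceil{d/q^{k-1}}$, which proves the first assertion.

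\emph{Second assertion.} Let $P\in\Epts(C)$. Then $C_P$ has length $g_q(k,d)-\ceil{d/q^{k-1}}=g_q(k-1,d)$, dimension $k-1$, and minimum distance $d'\ge d$. Suppose $d'>d$. The $i=0$ term of $g_q(k-1,\cdot)$ increases strictly and no other term decreases, so $g_q(k-1,d')>g_q(k-1,d)$, contradicting \eqref{eq:griesmer} for $C_P$. Hence $d'=d$, and $C_P$ is a $[g_q(k-1,d),k-1,d]_q$ Griesmer code.

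I expect the only delicate step to be the residual-code lemma, namely controlling its dimension and minimum distance in (i)--(ii). Everything else is bookkeeping with $g_q$.
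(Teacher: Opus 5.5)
Your proof is correct and complete. The paper gives no proof of this lemma; it imports it from \cite{DengHuangXiang2026}, so there is no in-paper argument to compare against. What you give is a sound, self-contained standard route.
\begin{itemize}
\item Shortening at an arbitrary point, together with the Griesmer bound for $C_P$, gives $\gamma(C)\le\ceil{d/q^{k-1}}$.
\item The residual code at a minimum weight codeword is a $[g_q(k-1,\ceil{d/q}),k-1,\ceil{d/q}]_q$ Griesmer code whose multiset is $\M_C$ restricted to $H_c$. Induction on $k$, with the one-dimensional case as base, then produces a point of multiplicity $\ceil{\ceil{d/q}/q^{k-2}}=\ceil{d/q^{k-1}}$.
\item The second assertion follows from strict monotonicity of $g_q(k-1,\cdot)$.
\end{itemize}

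The step you flagged as delicate is handled correctly: the pigeonhole on the ratios $x_i/c_i$, the resulting dimension $k-1$ and distance bound, and the identity $\ceil{\ceil{d/q}/q^{i}}=\ceil{d/q^{i+1}}$. Two facts are left implicit, both immediate:
\begin{itemize}
\item The residual code attains the Griesmer bound, so it is full-length. This is what makes the induction hypothesis and the one-point description in the case $k=2$ apply.
\item The Griesmer bound for $C_P$ is applied after deleting its $\M_C(P)$ zero coordinates, using $n(C)=g_q(k,d)$.
\end{itemize}
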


\begin{lemma}[{\cite[Lemma 11]{Ward2001}}]\label{lem:endpoint-count}
Let $C$ be a $[g_q(k,d),k,d]_q$ Griesmer code and put
\[
 \eta=d-\left(\left\lceil d/q^{k-1} \right\rceil-1\right)q^{k-1}.
\]
Then $1\le\eta\le q^{k-1}$ and $N(C)\ge g_q(k,\eta)$.
\end{lemma}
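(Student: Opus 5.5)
We prove the two claims separately. The bound $1\le\eta\le q^{k-1}$ comes from the definition of the ceiling. The bound $N(C)\ge g_q(k,\eta)$ comes from showing that the endpoint set $\Epts(C)$, read as a set of points, is itself the multiset of a $k$-dimensional code with minimum distance at least $\eta$; the Griesmer bound \eqref{eq:griesmer} applied to that code then gives the inequality.

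\emph{Range of $\eta$.} Put $\gamma=\lceil d/q^{k-1}\rceil$. Then $(\gamma-1)q^{k-1}<d\le \gamma q^{k-1}$. Subtracting $(\gamma-1)q^{k-1}$ gives $1\le\eta\le q^{k-1}$.

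\emph{Decomposition of $n$.} For $0\le i\le k-1$ we have
\[
\left\lceil \frac{d}{q^i}\right\rceil=(\gamma-1)q^{k-1-i}+\left\lceil \frac{\eta}{q^i}\right\rceil,
\]
because $(\gamma-1)q^{k-1-i}$ is an integer. Summing over $i$ gives
\[
n=g_q(k,d)=(\gamma-1)\theta+g_q(k,\eta),\qquad \theta=\frac{q^k-1}{q-1}=|\mathcal P(C^*)|.
\]
(This identity is not needed below, but it shows that the excess of $\M_C$ over the constant multiset $\gamma-1$ has total mass exactly $g_q(k,\eta)$.)

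\emph{Endpoints outside each hyperplane.} Fix a hyperplane $H$ of $\PG(C^*)$; it has the form $H_c$ for some nonzero $c\in C$. By \eqref{eq:weight-hyperplane}, $\M_C$ restricted to the complement $\mathcal P(C^*)\setminus H$, which has exactly $q^{k-1}$ points, satisfies
\[
\M_C(\mathcal P(C^*)\setminus H)=\wt(c)\ge d=(\gamma-1)q^{k-1}+\eta.
\]
Subtract $\gamma-1$ from every point of the complement; this gives
\[
\sum_{P\notin H}\bigl(\M_C(P)-(\gamma-1)\bigr)\ge\eta.
\]
By Lemma~\ref{lem:endpoint}, every point has multiplicity at most $\gamma$. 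So each summand is at most $1$, with equality exactly when $P\in\Epts(C)$, and it is $\le 0$ otherwise. Hence $|\Epts(C)\setminus H|\ge\eta\ge1$ for every hyperplane $H$.

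\emph{Applying the Griesmer bound to the endpoint set.} Since no hyperplane contains all endpoints, $\Epts(C)$ spans $C^*$. Take a generator matrix whose columns are representatives of the points of $\Epts(C)$, each used once. This defines a $[N(C),k]_q$ code. For each nonzero codeword, its weight is the number of endpoints off the corresponding hyperplane, so every nonzero weight is at least $\eta$. Let $d'\ge\eta$ be the minimum distance of this code. Then \eqref{eq:griesmer} and the monotonicity of $g_q(k,\cdot)$ give
\[
N(C)\ge g_q(k,d')\ge g_q(k,\eta).
\]

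The only step needing care is the counting in the third step. It relies on the multiplicity cap $\M_C(P)\le\gamma$, which we take from the first assertion of Lemma~\ref{lem:endpoint}. The remaining steps are routine.
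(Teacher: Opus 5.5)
Your proof is correct and complete. The paper gives no proof of this lemma; it simply imports it from Ward's survey, so there is no in-paper argument to compare against. Your argument is the natural one and is self-contained: the multiplicity cap $\M_C(P)\le\gamma$ forces the excess $\sum_{P\notin H}\bigl(\M_C(P)-(\gamma-1)\bigr)\ge\eta$ to be carried by endpoints, so the endpoints form a projective $[N(C),k,\ge\eta]_q$ code, and the Griesmer bound finishes.

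Two small remarks. First, Lemma~\ref{lem:endpoint} is stated only for $k\ge2$, but the case $k=1$ is trivial, since then $\eta=1=N(C)=g_q(1,1)$. Second, you only need the upper bound $\M_C(P)\le\gamma$. This follows directly from \eqref{eq:griesmer} applied to the shortened subcode, without invoking the full Lemma~\ref{lem:endpoint}: $g_q(k,d)-\M_C(P)=n(C_P)\ge g_q(k-1,d)$, i.e.\ $\M_C(P)\le\ceil{d/q^{k-1}}$.
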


\begin{lemma}[{\cite[Corollary 5]{Ward1998}, \cite[Proposition 4.2]{DengHuangXiang2026}}]\label{lem:constant}
If $C$ is a $[g_q(k,d),k,d]_q$ Griesmer code and $q^{k-1}|d$, then every nonzero codeword has weight $d$.
\end{lemma}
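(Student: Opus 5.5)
The approach is a direct averaging (Plotkin-type) argument. The Griesmer length is forced to be exactly the length at which the average nonzero weight equals $d$; since no nonzero weight can fall below $d$, every nonzero weight must equal $d$. No induction or endpoint machinery is needed.

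First, write $d=mq^{k-1}$ with $m\ge1$ a positive integer. Then $\ceil{d/q^i}=mq^{k-1-i}$ for $0\le i\le k-1$, so
\[
n=g_q(k,d)=m\sum_{i=0}^{k-1}q^{k-1-i}=m\,\frac{q^k-1}{q-1}.
\]

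Second, compute the total weight $\sum_{c\in C}\wt(c)$ by counting over coordinates. As noted in the preliminaries, every Griesmer code is full-length. Hence for each $\ell$ the coordinate form $\lambda_\ell$ is a nonzero linear form on $C$. Its kernel has $q^{k-1}$ elements, so exactly $q^k-q^{k-1}$ codewords have nonzero $\ell$-th coordinate. Summing over the $n$ coordinates gives
\[
\sum_{c\in C}\wt(c)=n(q^k-q^{k-1})=m\,\frac{q^k-1}{q-1}\,q^{k-1}(q-1)=mq^{k-1}(q^k-1)=d\,(q^k-1).
\]

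Third, conclude. There are $q^k-1$ nonzero codewords, each of weight at least $d$, and by the previous step their weights sum to exactly $d(q^k-1)$. Every inequality $\wt(c)\ge d$ must therefore be an equality, so every nonzero codeword has weight $d$.

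The only point requiring care is the full-length property, which ensures that every coordinate contributes exactly $q^k-q^{k-1}$ to the weight sum. The paper has already observed this for Griesmer codes, so I expect no real obstacle.
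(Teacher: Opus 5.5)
Your proof is correct. With $d=mq^{k-1}$ the Griesmer length is $n=m(q^k-1)/(q-1)$, and the full-length property gives $\sum_{c\in C}\wt(c)=nq^{k-1}(q-1)=d(q^k-1)$. Since each of the $q^k-1$ nonzero codewords has weight at least $d$, every such weight equals $d$.

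The paper gives no proof of this lemma. It imports the result from Ward \cite{Ward1998} and Deng--Huang--Xiang \cite{DengHuangXiang2026}, so there is no in-paper argument to compare yours against. What your route buys is self-containment. It is the equality case of the Plotkin averaging bound: the mean nonzero weight of a full-length $[n,k]_q$ code is $nq^{k-1}(q-1)/(q^k-1)$, and the Griesmer length forces this mean to equal $d$. It uses only the full-length observation already recorded in Section~\ref{sec:prelim}, with no endpoints, shortened subcodes, or $p$-adic machinery. In particular, it shows that the case $e\ge f(k-1)$ in the proof of Theorem~\ref{thm:ward}, where this lemma is invoked, is completely elementary.
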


\section{Polynomial identities and shortened codes}
\label{sec:polynomial}

Ward's proof of $q|d\Rightarrow p|\wt(c)$ uses the polynomial obtained by summing the $(q-1)$-st powers of the coordinate forms \cite[Proposition 13]{Ward2001}. To detect higher powers of $p$, we use coefficients of a product of these forms. The purpose of this section is to obtain the required identities in the polynomial ring.

\subsection{\texorpdfstring{$p$}{p}-adic number theory and Ward's divisibility criterion}

Let $\Z_p$ be the ring of $p$-adic integers, $\Q_p$ be the field of $p$-adic numbers and $\xi_{q-1}$ be a primitive $(q-1)$-st root of unity in $\overline{\Q_p}$. The extension $\Q_p(\xi_{q-1})/\Q_p$ is unramified of degree $f$, and its ring of integers is $R=\Z_p[\xi_{q-1}]$. Its maximal ideal is $pR$, and we identify its residue field with $\F_q$. We normalize the valuation by $\nu_p(p)=1$ and $\nu_p(0)=+\infty$. The \textit{Teichm\"uller lift} $T:\F_q\to R$ sends $a$ to the unique representative satisfying $T(a)\equiv a\pmod p$ and $T(a)^q=T(a)$. It is multiplicative but generally not additive. In particular, $T(0)=0$ and $T(a)^{q-1}=1$ for $a\ne0$. For more details, we refer the reader to \cites{Koblitz1984,Ward1990,DengHuangXiang2026}.

We use the notation $P\in p^tR[\X]$ to denote that every coefficient of $P$ belongs to $p^tR$. For $m\in\Z_{\ge 0}$, let $S_p(m)$ denote the sum of the digits in the base-$p$ expansion of $m$.

\begin{lemma}\label{lem:digits}
The following three statements hold.
\begin{enumerate}
    \item If $m_1+\cdots+m_r=M$ with all $m_i\in\Z_{\ge 0}$, then
    \[
    \vp\binom{M}{m_1,\ldots,m_r}
    =\frac{\sum_{i=1}^r S_p(m_i)-S_p(M)}{p-1}.
    \]
    \item Define $\rho_q(0)=0$ and, for $m>0$, let $\rho_q(m)$ be the unique integer in $\{1,\ldots,q-1\}$ such that $\rho_q(m)\equiv m\pmod{q-1}$. Then $T(a)^m=T(a)^{\rho_q(m)}$ for every $a\in\F_q$, and $S_p(\rho_q(m))\le S_p(m)$.
    \item If $t\ge1$ and $1\le j<p^t$, then
    \[
     S_p(j(q-1))\le\bigl(f+t-\vp(j)-1\bigr)(p-1).
    \]
\end{enumerate}
\end{lemma}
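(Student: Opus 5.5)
We treat the three items separately. Each is elementary $p$-adic digit arithmetic, and only item (3) takes real work.

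For (1), we use Legendre's formula $\vp(m!)=\frac{m-S_p(m)}{p-1}$. Then
\[
\vp\binom{M}{m_1,\ldots,m_r}=\vp(M!)-\sum_i\vp(m_i!)=\frac{M-S_p(M)-\sum_i(m_i-S_p(m_i))}{p-1},
\]
and the terms $M$ and $\sum_i m_i$ cancel, which gives the claimed identity. Legendre's formula itself follows by summing $\lfloor m/p^j\rfloor$ over $j\ge1$ and expressing the result through the base-$p$ digits of $m$.

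For (2), take $a\ne0$. Then $T(a)^{q-1}=1$, and since $m\equiv\rho_q(m)\pmod{q-1}$ we get $T(a)^m=T(a)^{\rho_q(m)}$. For $a=0$ and $m>0$, both sides are $0$ because $\rho_q(m)\ge1$. For $m=0$, both sides are $1$ (with the convention $0^0=1$).

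For the digit inequality in (2), observe that $p^f\equiv1\pmod{q-1}$. So if we write $m=\sum_i a_ip^i$ and reduce each exponent $i$ modulo $f$, we obtain a number $m'=\sum_{i<f}b_ip^i$ with $m'\equiv m\pmod{q-1}$ and $\sum_i b_i=S_p(m)$. Adding in base $p$ only loses digit sum through carries, so $S_p(m')\le S_p(m)$. We now repeat the same folding on $m'$. Each folding step strictly decreases the number while it is at least $q$, and it never increases the digit sum. The process therefore ends at a positive integer below $q$ that is congruent to $m$ modulo $q-1$.

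That integer is either $\rho_q(m)$ or $q-1$. There is one edge case: if we land on $0$, this is impossible, because a number with $m>0$ keeps a positive digit sum. If we land on $q-1$ while $\rho_q(m)=q-1$, we are done. So the folding yields a representative in $\{1,\dots,q-1\}$, which is exactly $\rho_q(m)$, and $S_p(\rho_q(m))\le S_p(m)$.

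For (3), we first write $j=p^{v}j'$ with $v=\vp(j)$ and $p\nmid j'$. Multiplying by $p^v$ does not change the digit sum, so $S_p(j(q-1))=S_p(j'(q-1))$, and $j'<p^{t-v}$. We are therefore reduced to the case $v=0$, where we must show $S_p(j(q-1))\le(f+t-1)(p-1)$ for $1\le j<p^t$ with $p\nmid j$.

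For this, write $j(q-1)=jp^f-j=(j-1)p^f+(p^f-j)$. The first piece $(j-1)p^f$ occupies only the digit positions at or above $f$. The second piece satisfies $0\le p^f-j<p^f$ whenever $j\le p^f$, so it fits in the low $f$ digits. Hence
\[
S_p(j(q-1))=S_p(j-1)+S_p(p^f-j).
\]
Since $p\nmid j$, we have $S_p(j-1)=S_p(j)-1$. When $j<p^f$, the complement identity $S_p(p^f-j)=f(p-1)-S_p(j-1)$ holds: $p^f-j=(p^f-1)-(j-1)$, and subtracting from the all-$(p-1)$ string $p^f-1$ complements each digit without borrows. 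Adding the two pieces gives $S_p(j(q-1))=f(p-1)$, which is at most $(f+t-1)(p-1)$ since $t\ge1$.

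The main obstacle is the case $j>p^f$, where the second piece no longer fits in the low digits. Here we use that $j(q-1)$ has at most $f+t$ digits because $j(q-1)<p^{t+f}$. If its top digit falls short of $p-1$, or if some digit is not maximal, we gain a deficit of at least $p-1$. To get that deficit we use the congruence $S_p(x)\equiv x\pmod{p-1}$: we have $j(q-1)\equiv0\pmod{p-1}$, so the digit sum is a multiple of $p-1$. The digit sum is at most $(f+t)(p-1)$, with equality only if $j(q-1)=p^{f+t}-1$. That equality would force $j(q-1)\equiv-1\pmod p$, whereas $j(q-1)\equiv-j\pmod p$, so it requires $j\equiv1\pmod p$. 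We then rule it out directly: $p^{f+t}-1$ is divisible by $q-1$ only when $f\mid t$, and in that case $j=(p^{f+t}-1)/(p^f-1)\ge p^t$, contradicting $j<p^t$. So equality fails, and the multiple-of-$(p-1)$ property drops the bound to $(f+t-1)(p-1)$.
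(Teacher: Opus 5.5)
Your proof is correct and follows the paper's route: Legendre's formula for (1); folding the base-$q$ digits together with subadditivity of $S_p$ for (2); and, for (3), stripping off $p^{\nu_p(j)}$, bounding the number of base-$p$ digits, excluding the all-$(p-1)$ string, and using $S_p(x)\equiv x\pmod{p-1}$. The only difference is that your case split at $j<p^f$ and your divisibility argument are unnecessary, since your general argument already covers every $j$, and the case $j(q-1)=p^{f+t}-1$ is ruled out at once by $j(q-1)\le(p^t-1)(p^f-1)<p^{f+t}-1$, which is how the paper handles it.
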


\begin{proof}
For the first part, this is a multinomial generalization of Kummer's theorem \cite{Kummer1852}, which follows from Legendre's formula
\[
 \nu_p(M!)=\frac{M-S_p(M)}{p-1}.
\]

For the second part, it is clear that $T(a)^m=T(a)^{\rho_q(m)}$. Write $m=\sum_jb_jq^j$, with $0\le b_j<q$. Replacing $m$ by $\sum_jb_j$ preserves its residue modulo $q-1$ and cannot increase its base-$p$ digit sum since $S_p(m)=\sum_{j} S_p(b_j)\ge S_p\left(\sum_{j} b_j\right)$ by the first part. Iterating this process, we end with $\rho_q(m)$.

For the third part, write $j=p^b u$, where $b=\vp(j)$ and $p\nmid u$. Since $u\le p^{t-b}-1$, we have $0<u(q-1)<p^{f+t-b}-1$. Thus, $u(q-1)$ has at most $f+t-b$ base-$p$ digits,
which are not all equal to $p-1$. Hence,
\[
S_p(u(q-1))<(f+t-b)(p-1).
\]
Since $S_p(u(q-1))\equiv u(q-1)\equiv0\pmod{p-1}$,
we obtain
\[
S_p(u(q-1))\le(f+t-b-1)(p-1).
\]
Multiplication by $p^b$ does not change the digit sum, proving the claim.
\end{proof}

We shall use the necessary direction of Ward's divisibility criterion \cite[Theorem 5.3]{Ward1990}, in the generator matrix form in \cite[Corollary 12]{Huang2026divisibility}.

\begin{theorem}[cf. {\cite[Corollary 12]{Huang2026divisibility}}]
Suppose that $D\le\F_q^n$ is $p^t$-divisible with $t\ge1$, and that $A=(a_{i\ell})_{1\le i\le r, 1\le \ell \le n}$ is a generator matrix of $D$. Let $0\le m_1,\ldots,m_r\le q-1$, where $\sum_{i=1}^r m_i$ is a positive multiple of $q-1$. Then
\begin{equation}\label{eq:criterion}
 \nu_p\left(\sum_{\ell=1}^n\prod_{i=1}^r T(a_{i\ell})^{m_i}\right)
 \ge t+f-\frac{\sum_{i=1}^r S_p(m_i)}{p-1}.
\end{equation}
\end{theorem}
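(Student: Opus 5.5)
The plan is to prove \eqref{eq:criterion} with Gauss sums. The idea is to express the weight of every codeword through an additive character, then take a twisted Fourier transform over all codewords. This turns the left side of \eqref{eq:criterion} into a weight sum, divisible by $qp^t$, divided by a product of Gauss sums whose valuations Stickelberger's theorem controls.

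\emph{Setup.} Work in $R'=R[\zeta_p]$, which is totally ramified of degree $p-1$ over $R$, with uniformizer $\pi$ satisfying $\pi^{p-1}=-p$; we keep $\nu_p(p)=1$. Let $\psi(a)=\zeta_p^{\mathrm{Tr}(a)}$ be the canonical additive character of $\F_q$, and let $\chi_m=T^{-m}$ (extended by $\chi_m(0)=0$ when $m\not\equiv 0\pmod{q-1}$). Define the Gauss sum $g(\chi)=\sum_{a\in\F_q^*}\chi(a)\psi(a)$. Stickelberger's theorem (Gross--Koblitz form) gives, for $0<m<q-1$,
\[
\nu_p\bigl(g(T^{-m})\bigr)=S_p(m)/(p-1),
\]
with the standard sign convention; otherwise swap $m$ and $-m$. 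It also gives $g(\chi_0)=-1$.

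\emph{Transform identity.} For $x\in\F_q^r$ let $c_x=xA\in D$. Set $F(c)=\sum_\ell\psi(c_\ell)$. Orthogonality gives
\[
\sum_{\lambda\in\F_q^*}F(\lambda c)=(q-1)n-q\,\wt(c).
\]
Now compute
\[
\Sigma:=\sum_{x\in\F_q^r}\prod_i T(x_i)^{m_i}F(c_x)
\]
in two ways.

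First expand coordinatewise. Since $\psi(c_{x,\ell})=\prod_i\psi(x_ia_{i\ell})$, the sum factors, and for each $i$ we have
\[
\sum_{x_i}T(x_i)^{m_i}\psi(x_ia_{i\ell})=\overline{\chi}(a_{i\ell})\,g(\cdot),
\]
a Gauss sum for $T^{m_i}$ times $T(a_{i\ell})^{-m_i}$ when $a_{i\ell}\ne0$. This yields
\[
\Sigma=\prod_i G_i\cdot\sum_\ell\prod_iT(a_{i\ell})^{-m_i}.
\]
Replacing $m_i$ by $q-1-m_i$ (when $0<m_i<q-1$) turns $T^{-m_i}$ into $T^{m_i}$. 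The bookkeeping of which Gauss sum appears, and of the degenerate indices $m_i\in\{0,q-1\}$, must be tracked so that $\nu_p(\prod_iG_i)=\sum_iS_p(m_i)/(p-1)-(\text{contribution of degenerate }i)$.

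Second, group the $x$ into $\F_q^*$-orbits. Since $q-1\mid\sum m_i$, the factor $\prod_iT(x_i)^{m_i}$ is constant on orbits. Hence
\[
\Sigma=\sum_{[x]}\prod_iT(x_i)^{m_i}\bigl((q-1)n-q\,\wt(c_x)\bigr).
\]
The $q\,\wt$ part has valuation at least $f+t$ by $p^t$-divisibility. The $n$ part is $n\sum_x\prod_iT(x_i)^{m_i}$, which by orthogonality of characters vanishes unless every $m_i\in\{0,q-1\}$.

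\emph{Conclusion and main obstacle.} In the generic case, when no $m_i\in\{0,q-1\}$ or the degenerate indices are harmless, dividing gives
\[
\nu_p\Bigl(\sum_\ell\prod_iT(a_{i\ell})^{m_i}\Bigr)\ge t+f-\sum_iS_p(m_i)/(p-1).
\]
The main obstacle is the degenerate indices.

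If $m_i=0$, the sum over $x_i$ produces $\sum_a\psi(a\,a_{i\ell})$. This equals $q$ when $a_{i\ell}=0$ and $0$ otherwise, which restricts to coordinates with $a_{i\ell}=0$; I would remove such rows by passing to a shortened subcode, which is still $p^t$-divisible, and induct on $r$.

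If every $m_i\in\{0,q-1\}$, the left side of \eqref{eq:criterion} counts the coordinates on which a set of rows is simultaneously nonzero. I would treat this case directly by inclusion--exclusion over weights of subcodes. If that becomes messy, the fallback is to absorb $m_i=q-1$ factors into the $\wt$ identity via $T(a)^{q-1}=[a\ne0]$ and apply the generic argument to the remaining $m_j$.
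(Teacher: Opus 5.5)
The paper gives no proof of this statement; it quotes Ward's criterion in the generator-matrix form of Huang--Deng--Hu. So your argument has to stand on its own.

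\textbf{Generic case.} When all $1\le m_i\le q-2$, your argument is correct. Both evaluations of $\Sigma$ are right. Also, $\nu_p(g(T^{-m}))=S_p(m)/(p-1)$ is the correct Stickelberger normalization for the Teichm\"uller character, so no hedge is needed. Hence $G_i=g(T^{m_i})$ has valuation $S_p(q-1-m_i)/(p-1)$, which matches the exponent $q-1-m_i$ on $T(a_{i\ell})$.

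\textbf{The gap: degenerate exponents.} These are not a side issue. For $q=2$ every exponent is degenerate, so your generic argument covers nothing. The reduced tuples $\rho$ in Lemma~\ref{lem:powers} also contain entries $0$ and $q-1$, e.g.\ $(q-1,m_2,q-1-m_2)$.

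The one concrete claim you make there is false. Restricting to $Z_i=\{\ell:a_{i\ell}=0\}$ is puncturing the subcode spanned by the other rows, not shortening. The result is in general only $p^{t-f}$-divisible. In the $[q+1,2,q]_q$ simplex code (so $t=f$), the second row restricted to the unique zero coordinate of the first row has weight $1$. An induction assuming $p^t$-divisibility there would prove a false bound. The lost $f$ is exactly what the factor $q=\sum_y\psi(y\cdot 0)$ compensates for.

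The mixed case, with some $m_i=q-1$ alongside generic exponents, is not done either. Your fallback needs $\nu_p(\wt_S(c))\ge t-fs$ for weights restricted to the set $S$ where the $s$ rows with $m_i=q-1$ are all nonzero. Nothing you wrote provides this.

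\textbf{How to close it.} A target exponent $m_i=0$ is trivial. Deleting row $i$ leaves a generator matrix of a subcode of $D$, which is still $p^t$-divisible, and neither side changes. So assume no $m_i=0$, let $I=\{i:m_i=q-1\}$ with $|I|=s\ge1$, and let $I_g$ be the generic indices.

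If $I_g\ne\emptyset$, write $T(a)^{q-1}=1-[a=0]$ and expand over $J\subseteq I$. For each $J$, run your transform on the subcode spanned by the rows in $I_g\cup J$. Use exponent $q-1-m_i$ for $i\in I_g$ and exponent $0$ for $j\in J$. The exponent sum is still a positive multiple of $q-1$, so the $x=0$ term and the $n$-term still vanish. You get
\[
 q^{|J|}\prod_{i\in I_g}g(T^{-m_i})\sum_{\ell\in Z_J}\prod_{i\in I_g}T(a_{i\ell})^{m_i}\in p^{t+f}R',\qquad Z_J=\{\ell:a_{j\ell}=0\ \text{for all } j\in J\}.
\]
So the $J$-term has valuation at least $t+f-f|J|-\sum_{i\in I_g}S_p(m_i)/(p-1)$. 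Since $S_p(q-1)=f(p-1)$, this is at least the required bound.

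If $I_g=\emptyset$, your inclusion--exclusion does work. For a subcode $E$, the effective length is $n(E)=q^{1-\dim E}\sum_{\langle c\rangle\le E}\wt(c)$, so $\nu_p(n(E))\ge t-f(\dim E-1)$. The left side equals $-\sum_{\emptyset\ne J\subseteq I}(-1)^{|J|}n(E_J)$, where $E_J$ is spanned by the rows in $J$. Its valuation is therefore at least $t-f(s-1)$, as required.
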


\subsection{Polynomials arising from divisible codes}

Let $\X=(X_1,\ldots,X_r)$ be a tuple of formal variables. Suppose that $D$ is an $[n,r]_q$ code with a generator matrix $A=(a_{i\ell})_{1\le i\le r, 1\le \ell \le n}$. We define its \textit{$\ell$-th column linear form} by
\[
 L_\ell(\X)=\sum_{i=1}^r a_{i\ell}X_i.
\]
We further define
\[
 \widetilde L_\ell(\X)=\sum_{i=1}^r T(a_{i\ell})X_i\in R[\X],
\]
where $T$ is the Teichm\"uller lift. For $j\ge1$, put
\[
 P_j(\X)=\sum_{\ell=1}^n\widetilde L_\ell(\X)^{j(q-1)}.
\]

\begin{lemma}\label{lem:powers}
If $D$ is $p^t$-divisible with $t\ge1$, then
\[
 \frac{P_j(\X)}{j}\in pR[\X]
\]
for every $1\le j<p^t$.
\end{lemma}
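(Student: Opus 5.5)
We expand $P_j$ by the multinomial theorem and bound each coefficient with Ward's criterion \eqref{eq:criterion} together with Lemma~\ref{lem:digits}. For a multi-index $\mathbf m=(m_1,\ldots,m_r)$ with $\sum_i m_i=M:=j(q-1)$, the coefficient of $\X^{\mathbf m}$ in $P_j(\X)$ is
\[
 c_{\mathbf m}=\binom{M}{m_1,\ldots,m_r}\sum_{\ell=1}^n\prod_{i=1}^r T(a_{i\ell})^{m_i}.
\]
We must show that $\vp(c_{\mathbf m})\ge 1+\vp(j)$.

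The first step is to reduce the exponents so that \eqref{eq:criterion} applies. By Lemma~\ref{lem:digits}(2), $T(a_{i\ell})^{m_i}=T(a_{i\ell})^{\rho_q(m_i)}$ for every $\ell$. Note that $T(0)^0=1$ while $\rho_q(0)=0$, so zero entries are handled consistently: if $m_i>0$ and $a_{i\ell}=0$, both sides are $0$. Put $m_i'=\rho_q(m_i)\in\{0,\ldots,q-1\}$. Then $\sum_i m_i'\equiv M\equiv 0\pmod{q-1}$, and the sum is positive because $M>0$ forces some $m_i>0$, hence some $m_i'\ge1$. So \eqref{eq:criterion} applies to $(m_i')$ and gives
\[
 \vp\Bigl(\sum_\ell\prod_i T(a_{i\ell})^{m_i}\Bigr)\ge t+f-\frac{\sum_i S_p(m_i')}{p-1}\ge t+f-\frac{\sum_i S_p(m_i)}{p-1},
\]
where the second inequality uses $S_p(m_i')\le S_p(m_i)$. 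By Lemma~\ref{lem:digits}(1), the multinomial coefficient has valuation $\bigl(\sum_i S_p(m_i)-S_p(M)\bigr)/(p-1)$.

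Adding the two bounds, the term $\sum_i S_p(m_i)$ cancels:
\[
 \vp(c_{\mathbf m})\ge t+f-\frac{S_p(j(q-1))}{p-1}.
\]
By Lemma~\ref{lem:digits}(3), valid since $1\le j<p^t$, we have $S_p(j(q-1))/(p-1)\le f+t-\vp(j)-1$. Therefore $\vp(c_{\mathbf m})\ge \vp(j)+1$, and dividing by $j$, which has valuation $\vp(j)$ in the unramified ring $R$, leaves a coefficient in $pR$.

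The cancellation is automatic once the bounds are combined, so the only delicate point is the exponent reduction: checking that the sum of the $m_i'$ is still a positive multiple of $q-1$, that every $m_i'$ lies in the range allowed by \eqref{eq:criterion}, and that entries $a_{i\ell}=0$ cause no trouble. The computation above shows that all three conditions hold.
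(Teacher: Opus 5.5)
Your proposal is correct and follows the paper's proof essentially step for step: the multinomial expansion of the coefficients, the exponent reduction via $\rho_q$, Ward's criterion, the multinomial valuation, and Lemma~\ref{lem:digits}(3). The only cosmetic difference is that you apply $S_p(\rho_q(m_i))\le S_p(m_i)$ before adding the two bounds, whereas the paper keeps the nonnegative term $\sum_i\bigl(S_p(m_i)-S_p(\rho_i)\bigr)/(p-1)$ and discards it at the end.
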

\begin{proof}
Fix $j$ and $\boldsymbol m=(m_1,\ldots,m_r)$ with $m_i\in\Z_{\ge0}$ and $\sum_{i=1}^r m_i=j(q-1)$. Write $P_j(\X)=\sum_{\boldsymbol m} C_{\boldsymbol m}\prod_{i=1}^r X_i^{m_i}$. Then
\[
 C_{\boldsymbol m}
 =\binom{j(q-1)}{m_1,\ldots,m_r}
  \sum_{\ell=1}^n\prod_{i=1}^rT(a_{i\ell})^{m_i}.
\]
Set $\rho_i=\rho_q(m_i)$. Then these exponents lie between $0$ and $q-1$, at least one is positive, and their sum is a positive multiple of $q-1$. By Lemma \ref{lem:digits} (2), $\prod_{i=1}^rT(a_{i\ell})^{m_i}=\prod_{i=1}^rT(a_{i\ell})^{\rho_i}$. Hence, \eqref{eq:criterion} gives
\[
\nu_p\left(\sum_{\ell=1}^n\prod_{i=1}^rT(a_{i\ell})^{m_i}\right) = \nu_p\left(\sum_{\ell=1}^n\prod_{i=1}^rT(a_{i\ell})^{\rho_i}\right)
 \ge t+f-\frac{\sum_{i=1}^r S_p(\rho_i)}{p-1}.    
\]
Adding the multinomial valuation from Lemma \ref{lem:digits}(1), we obtain
\[
 \vp(C_{\boldsymbol m})
 \ge t+f-\frac{S_p(j(q-1))}{p-1}
       +\frac{\sum_i\bigl(S_p(m_i)-S_p(\rho_i)\bigr)}{p-1}.
\]
By Lemma \ref{lem:digits}(2)(3), the right side is at least $\vp(j)+1$. Thus, every coefficient of $P_j/j$ lies in $pR$.
\end{proof}

Define polynomials $F_j\in\F_q[\X]$ by
\begin{equation}\label{eq:coefficient-product}
 \prod_{\ell=1}^n\bigl(1+L_\ell(\X)^{q-1}Y\bigr)
 =\sum_{j=0}^nF_j(\X)Y^j.
\end{equation}
We can now obtain the required formal identities.

\begin{proposition}\label{prop:formal}
If $D$ is $p^t$-divisible with $t\ge1$, then $F_j=0$ in $\F_q[\X]$ for every $1\le j<p^t$.
\end{proposition}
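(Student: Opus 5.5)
We lift the product \eqref{eq:coefficient-product} to $R[\X][[Y]]$ and express its coefficients through power sums using Newton's identities / the logarithm, then use Lemma \ref{lem:powers}. Concretely, set
\[
\widetilde G(\X,Y)=\prod_{\ell=1}^n\bigl(1+\widetilde L_\ell(\X)^{q-1}Y\bigr)=\sum_{j\ge0}\widetilde F_j(\X)Y^j\in R[\X][Y].
\]
Since $T(a)\equiv a\pmod p$, we have $\widetilde L_\ell\equiv L_\ell$ modulo $pR[\X]$. Hence reducing the coefficients of $\widetilde F_j$ modulo $p$ gives $F_j$, and it suffices to show $\widetilde F_j\in pR[\X]$ for $1\le j<p^t$.

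Working in $K[\X][[Y]]$ with $K=\Q_p(\xi_{q-1})$, we take logarithms:
\[
\log\widetilde G=\sum_{\ell}\sum_{j\ge1}\frac{(-1)^{j-1}}{j}\widetilde L_\ell^{\,j(q-1)}Y^j=\sum_{j\ge1}(-1)^{j-1}\frac{P_j(\X)}{j}Y^j .
\]
By Lemma \ref{lem:powers}, the coefficient $Q_j:=(-1)^{j-1}P_j/j$ lies in $pR[\X]$ for $1\le j<p^t$. Thus, modulo $Y^{p^t}$, we get $\widetilde G\equiv\exp(H)$ with $H=\sum_{j=1}^{p^t-1}Q_jY^j$, whose coefficients all lie in $pR[\X]$. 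Write $H=pH_1$ with $H_1\in R[\X][Y]$ and $Y\mid H_1$.

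We then need $\exp(pH_1)\equiv1\pmod{pR[\X][[Y]]}$ in degrees $1\le j<p^t$ (in fact in all degrees). Expanding, $\exp(pH_1)=\sum_{m\ge0}p^mH_1^m/m!$. The term for $m\ge1$ has valuation at least $m-\nu_p(m!)\ge m-(m-1)/(p-1)\ge1$ when $p\ge3$. For $p=2$ this bound is only $m-\nu_2(m!)=S_2(m)\ge1$, which still suffices, since every monomial coefficient is then divisible by $p$. So every coefficient of $\widetilde G-1$ in degrees $1,\ldots,p^t-1$ lies in $pR[\X]$. The truncation is legitimate because the coefficient of $Y^j$ in $\exp(\log\widetilde G)$ depends only on the $Q_i$ with $i\le j$. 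Reducing modulo $p$ gives $F_j=0$ for $1\le j<p^t$.

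The main point needing care is this exponential step: we must check that passing through $K$ (with denominators) is harmless. This follows because $\exp$ and $\log$ are mutually inverse on $YK[\X][[Y]]$ and $1+YK[\X][[Y]]$ as formal series, and the valuation estimate $\nu_p(p^m/m!)\ge1$ for all $m\ge1$ holds uniformly. An alternative that avoids the logarithm is Newton's identities, $j\widetilde F_j=\sum_{i=1}^j(-1)^{i-1}P_i\widetilde F_{j-i}$, with induction on $j$. However, dividing by $j$ when $p\mid j$ would need the finer statement $P_i/j\in pR$, which is less clean. So I would use the exponential formulation.
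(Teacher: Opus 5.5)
Your proposal is correct and follows essentially the same route as the paper: lift the product via the Teichm\"uller lift, take the logarithm to obtain the power sums $P_i/i$, use Lemma \ref{lem:powers} to place the truncated logarithm in $pR[\X,Y]$, and exponentiate using $\nu_p(m!)\le m-1$. Your separate treatment of $p=2$ is unnecessary, since the bound $m-(m-1)/(p-1)\ge1$ already holds for every prime $p$, with equality when $p=2$.
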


\begin{proof}
Put $Z_\ell=\widetilde L_\ell(\X)^{q-1}$ and
\[
E(Y)=\prod_{\ell=1}^n(1+Z_\ell Y)
=\sum_{j\ge0}e_jY^j.
\]
Reduction modulo $p$ sends $e_j$ to $F_j$. Working in
$\Q_p(\xi_{q-1})[\X][[Y]]$, we have
\[
\log E(Y)
=\sum_{i\ge1}(-1)^{i-1}\frac{P_i(\X)}{i}Y^i;
\]
see also \cite[Chapter I, Section 2]{Macdonald1995}. Fix $1\le j<p^t$ and put
\[
B(Y)=\sum_{i=1}^j(-1)^{i-1}\frac{P_i(\X)}{i}Y^i.
\]
By Lemma \ref{lem:powers}, $B(Y)\in pR[\X,Y]$.
Since $B(Y)\equiv\log E(Y)\pmod{Y^{j+1}}$, the formal
exponential gives
\[
E(Y)\equiv
\sum_{m=0}^j\frac{B(Y)^m}{m!}
\pmod{Y^{j+1}}.
\]
For $1\le m\le j$, we have
\[
\frac{B(Y)^m}{m!}
\in p^{m-\nu_p(m!)}R[\X,Y]
\subseteq pR[\X,Y],
\]
because $\nu_p(m!)\le m-1$. Comparing the coefficients of $Y^j$, we obtain $e_j\in pR[\X]$. Reduction modulo $p$ proves the assertion.
\end{proof}

Note that in \eqref{eq:coefficient-product}, each $L_\ell(x)^{q-1}$ is zero or one according as the $\ell$-th coordinate of $xA$ is zero or nonzero, where $A$ is a generator matrix of $D$. Consequently, the following equality holds in $\F_q$:
\begin{equation}\label{eq:weight-coefficients}
 F_j(x)=\binom{\wt(xA)}j.
\end{equation}
By Lucas' theorem \cite{Lucas1878}, $\binom{w}{p^i}$ modulo $p$ is the $i$-th base-$p$ digit of $w$. Thus,
\begin{equation}\label{eq:lucas}
 p^t|w
 \ \Longleftrightarrow\
 \binom{w}{p^i}\equiv0\pmod p\text{ for every }0\le i<t.
\end{equation}

\subsection{A divisibility criterion using shortened subcodes}

In the proof of \cite[Proposition 13]{Ward2001}, each endpoint provides a factor of the polynomial $F_1(\X)=\sum_{\ell=1}^n L_\ell(\X)^{q-1}$. Proposition \ref{prop:formal} allows the same argument to be applied to the higher coefficients in \eqref{eq:coefficient-product}.

\begin{theorem}\label{thm:shortening}
Let $C$ be an $[n,k]_q$ code with $k\ge 2$, and let $t\ge1$. Suppose that $C_P$ is $p^t$-divisible for $N$ distinct points $P\in\mathcal{P}(C^*)$. If $N>(q-1)p^{t-1}$, then $C$ is $p^t$-divisible.
\end{theorem}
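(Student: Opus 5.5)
We argue by induction on $t$, using the polynomials $F_j$ of \eqref{eq:coefficient-product} attached to $C$ itself. Fix a generator matrix $G$ of $C$ with column forms $L_\ell(\X)$, $\X=(X_1,\dots,X_k)$, and identify $\mathcal P(C^*)$ with points of $\PG(k-1,q)$ through $x\mapsto xG$. By \eqref{eq:weight-coefficients} and \eqref{eq:lucas}, $C$ is $p^t$-divisible iff $F_{p^i}(x)=0$ for every $x\in\F_q^k$ and every $0\le i<t$. Since $C_P$ is $p^t$-divisible for $N>(q-1)p^{t-1}\ge (q-1)p^{s-1}$ points, the induction hypothesis for $s=t-1$ gives that $C$ is $p^{t-1}$-divisible (the base case $t=1$ requires nothing). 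It therefore remains to show that $F_{p^{t-1}}$ vanishes on $\F_q^k$.

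The key step is to show that $F_j$, for $1\le j<p^t$, is divisible in $\F_q[\X]$ by the linear form $\lambda_P(\X)$ of each good point $P$. Choose coordinates so that $P=\langle X_k\rangle$, i.e.\ $C_P=\{xG: x_k=0\}$ after a change of basis. Setting $X_k=0$ in $\prod_\ell(1+L_\ell^{q-1}Y)$ gives the product over the columns of the generator matrix of $C_P$ (columns that become zero contribute the factor $1$). By Proposition~\ref{prop:formal} applied to $D=C_P$, this specialization has $F_j|_{X_k=0}=0$ for $1\le j<p^t$. Hence $X_k\mid F_j$. Distinct points give non-associate linear forms, so $\prod_{P}\lambda_P$, a product of $N$ distinct forms, divides $F_j$.

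Now compare degrees. $F_j$ is homogeneous of degree $j(q-1)$. Taking $j=p^{t-1}$, its degree is $(q-1)p^{t-1}<N$, so $F_{p^{t-1}}=0$ identically as a polynomial, and in particular it vanishes at every $x$. Together with the inductive hypothesis, this shows that $C$ is $p^t$-divisible. Note that this argument in fact gives $F_j=0$ for every $j\le p^{t-1}$, and the case $j=p^{t-1}$ is the only one needed beyond induction.

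The main obstacle is the specialization step. One must check that restricting $\X$ to the hyperplane $X_k=0$ really produces the polynomial $\prod(1+L'^{\,q-1}Y)$ of a generator matrix of $C_P$, which then has $k-1\ge1$ variables. This holds because the rows $g_1,\dots,g_{k-1}$ of $G$ generate $\ker\lambda_P$ after the basis change. The paper's convention of counting $C_P$ by its effective length is harmless, since zero columns contribute trivially, and Proposition~\ref{prop:formal} only needs $C_P$ to be $p^t$-divisible as a code in $\F_q^n$, which holds whether or not zero coordinates are included.
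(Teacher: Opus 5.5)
Your proof is correct and is essentially the paper's argument. Both apply Proposition~\ref{prop:formal} to each $C_P$ to get $\mu_P\mid F_j$, multiply the $N$ non-associate linear factors, and use $\deg F_j\le (q-1)j$. The induction on $t$ is unnecessary: as you note yourself, the paper applies the degree bound $(q-1)p^a\le (q-1)p^{t-1}<N$ directly to every $F_{p^a}$ with $0\le a<t$.
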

\begin{proof}
Choose a generator matrix $G$ of $C$ and form the coefficients $F_j$ in \eqref{eq:coefficient-product}. For a specified point $P=\langle\lambda_P\rangle\in\mathcal{P}(C^*)$, put $\mu_P(x)=\lambda_P(xG)$, which is a nonzero linear form on $\F_q^k$. Choose a rank-$(k-1)$ matrix $B_P\in\F_q^{(k-1)\times k}$ such that the map $u\mapsto uB_P$ has image 
\begin{equation}\label{eq:subcode_image}
\left\{uB_P:u=(u_1,\ldots,u_{k-1})\in\F_q^{k-1}\right\}=\ker\mu_P.
\end{equation}
Then $B_PG$ is a generator matrix of $C_P$. For every $0\le a<t$, applying Proposition \ref{prop:formal} to the shortened code $C_P$ gives the polynomial identity
\begin{equation}\label{eq:polynomial_identity}
F_{p^a}(u B_P)=0\in\F_q[u_1,\ldots,u_{k-1}].    
\end{equation}
If $x\in\ker\mu_P$, combining \eqref{eq:subcode_image} and \eqref{eq:polynomial_identity}, we have $x\in\ker F_{p^a}$, which implies $\mu_P|F_{p^a}$. Since $N$ distinct points give nonproportional linear forms that are pairwise nonassociate irreducibles in the unique factorization domain $\F_q[\X]$, their product divides $F_{p^a}$. Note that by \eqref{eq:coefficient-product}, $\deg F_{p^a}\le (q-1)p^a$. If $F_{p^a}\ne0$, then
\[
 N\le\deg F_{p^a}\le(q-1)p^a\le(q-1)p^{t-1}<N,
\]
which is impossible. Hence, $F_{p^a}=0$ for every $0\le a<t$. Combining \eqref{eq:weight-coefficients} and \eqref{eq:lucas}, we see that $p^t|\wt(xG)$ for every $x\in\F_q^k$.
\end{proof}

The following example shows that the strict inequality in Theorem \ref{thm:shortening} cannot be replaced by equality.

\begin{example}
In $\PG(2,4)$, the six points
\[
 \{\langle(1,t,t^2)\rangle:t\in\F_4\}
 \cup\{\langle(0,1,0)\rangle,\langle(0,0,1)\rangle\}
\]
form a \textit{hyperoval}; see \cite[Example 1.2]{DengHuangXiang2026}. The corresponding code is the $[6,3,4]_4$ hexacode and has nonzero weights $4,6$. The shortened subcode at any coordinate is a $[5,2,4]_4$ constant weight code, hence is $4$-divisible. However, the original code is not $4$-divisible, while the number of shortening points is $6=(4-1)2$.
\end{example}

\section{Proof of Theorem \ref{thm:ward}}
\label{sec:wardproof}

\begin{proof}[Proof of Theorem \ref{thm:ward}]
Fix $q=p^f$. We use induction on the dimension $k$, simultaneously for all positive minimum distances $d$. Put $e=\vp(d)$. The assertion is trivial if $e<f$. If $k=1$, every nonzero codeword has weight $d$. Suppose that $k\ge2$ and $e\ge f$, and assume the conclusion is true for dimension less than $k$.

If $e\ge f(k-1)$, then $q^{k-1}|d$ and Lemma \ref{lem:constant} applies. Thus, we assume that $f\le e<f(k-1)$. Put $t=e-f+1$. By Lemma \ref{lem:endpoint}, $C_P$ is a $[g_q(k-1,d),k-1,d]_q$ Griesmer subcode for any endpoint $P$. By the induction hypothesis, each of these subcodes
is $p^t$-divisible.

Let $\eta=d-\left(\ceil{d/q^{k-1}}-1\right)q^{k-1}$. Since $e<f(k-1)$, $p^e|\eta$. By Lemma \ref{lem:endpoint-count}, $\eta>0$ so $\eta\ge p^e$. Again by Lemma \ref{lem:endpoint-count}, we have
\[
 N(C)\ge g_q(k,\eta)\ge\eta+1\ge p^e+1>(q-1)p^{t-1}.
\]
Theorem \ref{thm:shortening} implies that $C$ is $p^t$-divisible. This completes the induction.
\end{proof}

\section*{Acknowledgments}

The authors would like to thank Prof. Qing Xiang and Prof. Tao Feng for valuable discussions and suggestions. This work is supported in part by the National Key R\&D Program of China under Grant No. 2025YFA1017700. Haihua Deng is supported by the National Natural Science Foundation of China under Grant No. 123B2011, and the Postdoctoral Fellowship Program and China Postdoctoral Science Foundation under Grant No. BX20250059.


\begin{bibdiv}
\begin{biblist}

\bib{Ax1964}{article}{
    AUTHOR = {Ax, J.},
     TITLE = {Zeroes of polynomials over finite fields},
   JOURNAL = {Amer. J. Math.},
    VOLUME = {86},
      YEAR = {1964},
     PAGES = {255--261},
      ISSN = {0002-9327,1080-6377},
       URL = {https://doi.org/10.2307/2373163},
}

\bib{Delsarte1976}{article}{
  author={Delsarte, P.},
  author={McEliece, R. J.},
  title={Zeros of functions in finite Abelian group algebras},
  journal={Amer. J. Math.},
  volume={98},
  date={1976},
  number={1},
  pages={197--224},
}

\bib{DengHuangXiang2026}{article}{
 author={Deng, H.},
 author={Huang, H.},
 author={Xiang, Q.},
 title={Divisibility of {G}riesmer codes},
 journal={J. Combin. Theory Ser. A},
 volume={222},
 date={2026},
 pages={Paper No. 106181},
 note={36 pp.},
}

\bib{DodunekovManev1984}{article}{
 author={Dodunekov, S. M.},
 author={Manev, N. L.},
 title={Minimum possible block length of a linear binary code for some distances},
 journal={Problems Inform. Transmission},
 volume={20},
 date={1984},
 number={1},
 pages={8--14},
}

\bib{DodunekovSimonis1998}{article}{
 author={Dodunekov, S. M.},
 author={Simonis, J.},
 title={Codes and projective multisets},
 journal={Electron. J. Combin.},
 volume={5},
 date={1998},
 pages={Research Paper 37},
 note={23 pp.},
}

\bib{Griesmer1960}{article}{
 author={Griesmer, J. H.},
 title={A bound for error-correcting codes},
 journal={IBM J. Res. Develop.},
 volume={4},
 date={1960},
 number={5},
 pages={532--542},
}

\bib{HonoldKiermaierKurz2018}{incollection}{
 author={Honold, T.},
 author={Kiermaier, M.},
 author={Kurz, S.},
 title={Partial spreads and vector space partitions},
 book={
  title={Network coding and subspace designs},
  publisher={Springer},
  address={Cham},
  date={2018},
 },
 pages={131--170},
}

\bib{Huang2026divisibility}{article}{
  author={Huang, H.},
  author={Deng, H.},
  author={Hu, S.},
  title={Divisibility of Trace Codes},
  journal={arXiv:2605.19857v2},
  date={2026},
}

\bib{Koblitz1984}{book}{
 author={Koblitz, N.},
 title={$p$-Adic numbers, $p$-adic analysis, and zeta-functions},
 edition={2},
 series={Graduate Texts in Mathematics},
 volume={58},
 publisher={Springer-Verlag},
 address={New York},
 date={1984},
}

\bib{Kummer1852}{article}{
 author={Kummer, E. E.},
 title={{\"U}ber die Erg{\"a}nzungss{\"a}tze zu den allgemeinen Reciprocit{\"a}tsgesetzen},
 journal={J. Reine Angew. Math.},
 volume={44},
 date={1852},
 pages={93--146},
}

\bib{Kurz2025}{article}{
 author={Kurz, S.},
 title={Divisible codes},
 date={2025},
 journal={arXiv:2112.11763v4},
}

\bib{Landjev2001}{incollection}{
 author={Landjev, I. N.},
 title={The geometric approach to linear codes},
 book={
  title={Finite geometries},
  series={Developments in Mathematics},
  volume={3},
  publisher={Kluwer Acad. Publ.},
  address={Dordrecht},
  date={2001},
 },
 pages={247--256},
}

\bib{LandjevRousseva2026}{misc}{
 author={Landjev, I.},
 author={Rousseva, A.},
 title={On the divisibility of linear codes and arcs},
 date={2026},
 note={Contribution to the International Workshop on Coding and Cryptography (WCC 2026), 9 pp.},
}

\bib{Lucas1878}{article}{
 author={Lucas, E.},
 title={Sur les congruences des nombres eul{\'e}riens et des coefficients diff{\'e}rentiels des fonctions trigonom{\'e}triques suivant un module premier},
 journal={Bull. Soc. Math. France},
 volume={6},
 date={1878},
 pages={49--54},
}

\bib{Macdonald1995}{book}{
 author={Macdonald, I. G.},
 title={Symmetric functions and {H}all polynomials},
 edition={2},
 series={Oxford Mathematical Monographs},
 publisher={Clarendon Press},
 address={Oxford},
 date={1995},
}

\bib{SolomonStiffler1965}{article}{
 author={Solomon, G.},
 author={Stiffler, J. J.},
 title={Algebraically punctured cyclic codes},
 journal={Information and Control},
 volume={8},
 date={1965},
 pages={170--179},
}

\bib{TsfasmanVladut1995}{article}{
  author={Tsfasman, M. A.},
  author={Vl{\u{a}}du{\c{t}}, S. G.},
  title={Geometric approach to higher weights},
  journal={IEEE Trans. Inform. Theory},
  volume={41},
  date={1995},
  number={6},
  pages={1564--1588},
}

\bib{Ward1981}{article}{
 author={Ward, H. N.},
 title={Divisible codes},
 journal={Arch. Math. (Basel)},
 volume={36},
 date={1981},
 number={6},
 pages={485--494},
}

\bib{Ward1990}{article}{
 author={Ward, H. N.},
 title={Weight polarization and divisibility},
 journal={Discrete Math.},
 volume={83},
 date={1990},
 number={2--3},
 pages={315--326},
}

\bib{Ward1998}{article}{
 author={Ward, H. N.},
 title={Divisibility of codes meeting the {G}riesmer bound},
 journal={J. Combin. Theory Ser. A},
 volume={83},
 date={1998},
 number={1},
 pages={79--93},
}

\bib{Ward2001}{article}{
 author={Ward, H. N.},
 title={Divisible codes---a survey},
 journal={Serdica Math. J.},
 volume={27},
 date={2001},
 number={4},
 pages={263--278},
}

\end{biblist}
\end{bibdiv}
\end{document}